\newtheorem{theorem}{Theorem}
\newtheorem{lem}[theorem]{Lemma}
\newtheorem{prop}[theorem]{Proposition}
\newtheorem*{theorem*}{Main Theorem} 
\newtheorem*{theorem**}{Theorem} 
\theoremstyle{definition}
\theoremstyle{remark}
\newtheorem{remark}[theorem]{Remark}
\numberwithin{equation}{section}
\begin{document}

\title[The solution map 
for the Euler equations
]
{Loss of continuity of the solution map for the Euler equations in $\alpha$-modulation and H\"older spaces 
}

\author{Gerard Misio{\l}ek}
\address{Department of Mathematics, University of Colorado, Boulder, CO 80309-0395, USA 
and 
Department of Mathematics, University of Notre Dame, IN 46556, USA} 
\email{gmisiole@nd.edu} 

\author{Tsuyoshi Yoneda}
\address{Department of Mathematics, Tokyo Institute of Technology, Meguro-ku, Tokyo 152-8551, Japan} 
\email{yoneda@math.titech.ac.jp}

\subjclass[2000]{Primary 35Q35; Secondary 35B30}

\date{\today} 


\keywords{Euler equations, ill-posedness, Besov spaces, $\alpha$-modulation spaces} 

\begin{abstract} 
We study the incompressible Euler equations in the $\alpha$-modulation $M^{s,\alpha}_{p,q}$ 
and H\"older $C^{1+\sigma}$ spaces on the plane. 
We show that for these spaces the associated data-to-solution map is not continuous on bounded sets. 
\end{abstract} 

\maketitle

\section{Introduction} 
\label{sec:Intro} 

In this paper we study the Cauchy problem for the non-periodic Euler equations of 
incompressible hydrodynamics 
\begin{align*} 
&u_t + u \cdot \nabla u + \nabla p = 0, 
\qquad\quad\quad
t \geq 0, \; x \in \mathbb{R}^n 
\\  \tag{E} \label{E} 
&\mathrm{div}\, u = 0 
\\ 
&u(0) = u_0 
\end{align*} 
with initial data in the $\alpha$-modulation spaces. 
In particular, our results apply to the Besov spaces including the classical H\"older-Zygmund spaces. 
According to the standard notion of well-posedness due to Hadamard a Cauchy problem 
is said to be locally (in time) well-posed in a Banach space $X$ if given any initial data $u_0$ in $X$ 
there is a time $T>0$ and a unique solution $u$ in a Banach space $Y \subset C([0,T), X)$ 
which depends continuously on the initial data. 
Otherwise the Cauchy problem is said to be locally ill-posed. 

Ill-posedness results establishing loss of continuity of the solution map  
$u_0 \to u$ for the Euler equations in the $C^1$ space and the borderline Besov space $B^1_{\infty,1}$ 
have been proved recently in \cite{MY2}. 
Here we refine the techniques of that paper to obtain ill-posedness results of this type 
in $\alpha$-modulation spaces $M^{s,\alpha}_{p,q}$ for $1<s<2$ 
and in $C^{1+\sigma}$ for $0<\sigma<1$. 
More precisely, 
following the approach of Bourgain and Li \cite{BL} we construct a Lagrangian flow with a large gradient 
and then choose a suitable high-frequency perturbation of the initial vorticity to show 
that the assumption of continuity of the solution map $u_0 \to u$ in the above spaces 
necessarily leads to a contradiction with the results of Kato and Ponce \cite{KP0, KP}. 
We will work with the vorticity equations which in two dimensions assume the form 
\begin{align} \nonumber 
&\omega_t + u{\cdot}\nabla \omega = 0, 
\qquad\qquad\qquad\quad 
t \geq 0, \; x \in \mathbb{R}^2 
\\ \label{B2:eq:euler-v} 
&u = K \ast \omega = \nabla^\perp \Delta^{-1} \omega 
\\  \nonumber
&\omega(0) = \omega_0 
\end{align} 
where 
$$
K(x) = \frac{1}{2\pi} \frac{(-x_2, x_1)}{ |x|^2} 
\quad 
\text{and} 
\quad 
\nabla^\bot = \Big(-\frac{\partial}{\partial x_2}, \frac{\partial}{\partial x_1} \Big)
$$ 
denote the Biot-Savart kernel and the symplectic gradient, respectively. 

The first rigorous results on the Cauchy problem for the incompressible Euler equations go back to 
Gyunter \cite{Gu}, Lichtenstein \cite{Li} and Wolibner \cite{Wo}. 
A survey of those and numerous further results can be found for example in Majda and Bertozzi \cite{MB}, 
Constantin \cite{Co} or Bahouri, Chemin and Danchin \cite{BCD}. 
For the most recent progress on local ill-posedness in borderline spaces such as 
$C^1$, $W^{n/p+1,p}$, $B^{n/p+1}_{p,q}$ as well as $C^k$, $C^{k-1,1}$ for integer $k \geq1$ 
we refer to the papers of 
Bourgain and Li \cite{BL, BL1}, Elgindi and Masmoudi \cite{EM} and the authors \cite{MY2}. 
For earlier results in $C^\sigma$ with $0 < \sigma < 1$, 
$B^s_{p,\infty}$ for $s > 0$, $p > 2$ and $s > n(2/p-1)$, $1 \leq p \leq 2$ 
or the logarithmic Lipschitz spaces $\log\text{Lip}^\alpha$ for $0 < \alpha < 1$ 
we refer to Bardos and Titi \cite{BT}, Cheskidov and Shvydkoy \cite{CS} and the authors \cite{MY}. 

Our main goal is to prove the following result. 
%
%
\begin{theorem*} \label{M:thm} 
Let $0 < \sigma < 1$, $0 < \alpha \leq 1$, $2 \leq p \leq \infty$, $1 \leq q \leq \infty$ and 
suppose that $M^{1+\sigma,\alpha}_{p,q}$ is continuously embedded in $C^1$. 
Then the solution map of the incompressible Euler equations \eqref{E} is not continuous 
on bounded subsets of $M^{1+\sigma,\alpha}_{p,q}$. 
\end{theorem*} 
Thus, the Euler equations are in general locally ill-posed in $\alpha$-modulation spaces 
in the sense of  Hadamard given above.\footnote{For example, we have 
$M^{1+\sigma, \alpha}_{\infty,q} \subset C^1$ whenever $\sigma > 2(1-\alpha)(1 - 1/q)$.} 
For the definition of the $\alpha$-modulation spaces see Section \ref{B2:sec:Lag} below. 
\begin{remark} 
Observe that $M^{s,1}_{p,q}$ coincides with the usual Besov space $B^s_{p,q}$. Therefore, 
somewhat surprisingly, Theorem \ref{M:thm} also yields ill-posedness (in the sense that 
the data-to-solution map loses its continuity properties) 
even in the classical H\"older spaces $B^{1+\sigma}_{\infty,\infty} = C^{1+\sigma}$ for $0 < \sigma < 1$. 

Continuous dependence results for the Euler equations (in the strong topology) have been obtained 
for initial data in Sobolev spaces $H^s$ and more generally $W^{s,p}$ with $s > n/p + 1$ 
for example in Ebin and Marsden \cite{EM}, Kato and Lai \cite{KL} and Kato and Ponce \cite{KP}. 
However, this is a rather difficult part of the local well-posedness theory which has not yet been 
satisfactorily resolved. 
\end{remark} 
\begin{remark} 
A different mechanism involving a gradual loss of regularity of the solution map is described 
by Morgulis, Shnirelman and Yudovich  \cite{MSY}. 
\end{remark} 
\begin{remark} 
In this context it is also worth pointing out that neither for the critical Besov space $B^1_{\infty,1}(\mathbb{R}^n)$ 
nor for the space $B^{1+p/n}_{p,1}(\mathbb{R}^n)$ 
are the Euler equations strongly ill-posed in the sense of Bourgain and Li \cite{BL}. 
This can be seen by examining the arguments given in Pak and Park \cite{PP} and Vishik \cite{V}. 
\end{remark} 

Our general strategy will be similar to that employed in \cite{MY2} which we will use as the main reference. 
The remainder of the paper is organized as follows. 
In Section \ref{B2:sec:Lag} we describe the general set up and prove several technical lemmas. 
The whole of Section \ref{sec:proof} is then devoted to the proof of Theorem \ref{M:thm}. 
Although the constructions in Sections \ref{B2:sec:Lag} and \ref{sec:proof} will be carried out in 2D 
they can be readily adapted to the 3D case. 
Rather than doing that 
in Section \ref{shear flow} we give a direct proof of ill-posedness in $C^{1+s}$ by using 
a 3D shear flow argument.

\section{Basic Setup: Vorticity and Lagrangian Flow} 
\label{B2:sec:Lag}

We first recall the definition of $\alpha$-modulation spaces. For a more detailed account the reader 
is referred for example to \cite{FHW}. 
A countable set $\mathcal Q$ of subsets $Q\subset \mathbb{R}^n$ is called an admissible 
covering if  $\mathbb{R}^n=\bigcup_{Q\in\mathcal{Q}}Q$ and if there is $n_0 < \infty$
such that $\#\{Q'\in\mathcal{Q}: Q\cap Q'\not=\emptyset\}\leq n_0$ for all $Q\in\mathcal{Q}$.
Let 
\begin{eqnarray*}
r_Q&=&\sup\{r\in\mathbb R:B(c_r,r)\subset Q, c_r \in \mathbb{R}^n \}\\
R_Q&=&\inf\{R\in\mathbb{R}:Q\subset B(c_R,R), c_R \in \mathbb{R}^n \}.
\end{eqnarray*} 
Given $0\leq \alpha\leq 1$, an admissible covering is an $\alpha$-covering of $\mathbb{R}^n$ if 
$|Q|\sim (1+|x|^2)^{\alpha n/2}$ (uniformly) for all $Q\in\mathcal Q$ and all $x\in Q$ 
and where 
$\sup_{Q\in\mathcal Q}R_Q/r_Q\leq K$ 
for some $K<\infty$. 
Let $\mathcal Q$ be an $\alpha$-covering of $\mathbb{R}^n$. 
A bounded admissible partition of unity of order $p$ (abbreviated $p$-BAPU) 
corresponding to $\mathcal Q$ is a family of smooth functions $\{\psi_Q\}_{Q\in\mathcal Q}$ 
satisfying
\begin{eqnarray*}
& &
\psi_Q:\mathbb{R}^n\to[0,1],\quad \text{supp}\ \psi_Q\subset Q,\\
& &
\sum_{Q\in\mathcal Q}\psi_Q(\xi)\equiv 1,\quad \xi\in\mathbb{R}^n,\\
& &
\sup_{Q\in\mathcal Q}|Q|^{1/p-1}\|\mathcal F^{-1}\psi_Q\|_{L^p}<\infty 
\end{eqnarray*}
where $\mathcal{F}$ denotes the Fourier transform. 

For any $1\leq p,q\leq\infty$, $s\in \mathbb{R}$ and $0\leq \alpha\leq 1$ the $\alpha$-modulation space 
$M^{s,\alpha}_{p,q}(\mathbb{R}^n)$ is the space of all tempered distributions $f$ for which 
the following norm 
\begin{equation*} 
\|f\|_{M^{s,\alpha}_{p,q}} 
= \left\{ 
\begin{matrix} \displaystyle
~~~\Bigg( \sum_{Q\in\mathcal Q} \big( 1+| \xi_Q |^2 \big)^{qs/2} 
\big\|  \mathcal F^{-1}\psi_Q\mathcal F f  \big\|_{L^p(\mathbb{R}^n)}^q  \Bigg)^{1/q}& 
\mathrm{if} \quad 1\leq q <\infty 
\\  \displaystyle
\sup_{Q \in \mathcal{Q}}{ \big( 1 + | \xi_Q |^2 \big)^{s/2}} \big\| \mathcal F^{-1}\psi_Q\mathcal F f \big\|_{L^p}& 
\mathrm{if} \qquad\;\; q=\infty 
\end{matrix} 
\right. 
\end{equation*} 
is finite, 
where $\{ \xi_Q \in Q: Q \in \mathcal{Q} \}$ is an arbitrary sequence. 
One shows that this definition is independent of an $\alpha$-covering $\mathcal{Q}$ and 
of $p$-BAPU. 

The following embedding results for $\alpha$-modulation spaces are known. 
Suppose that $\alpha_1<\alpha_2<1$ and $1\leq p\leq \infty$. Then 
\begin{equation*} 
M^{s,\alpha_1}_{p,1}\subset M^{s,\alpha_2}_{p,1} 
\end{equation*} 
and, in particular, for any $\alpha<1$ and $1\leq p\leq \infty$ we have 
\begin{equation*} 
M^{s,\alpha}_{p,1}\subset B^{s}_{p,1} 
\end{equation*} 
The proofs of these results can be found e.g. in \cite{HW}; see Thm. 4.1 and Thm. 4.2. 

We next proceed to choose the initial vorticity $\omega_0$ in \eqref{B2:eq:euler-v}. 
Given any radial bump function $0 \leq \varphi \leq 1$ with support in $B(0,1/4)$ define 
\begin{align} \label{B2:eq:bump} 
\varphi_0(x_1, x_2) 
= 
\sum_{\varepsilon_1, \varepsilon_2 = \pm 1} 
\varepsilon_1 \varepsilon_2 \varphi(x_1 {-} \varepsilon_1, x_2 {-} \varepsilon_2). 
\end{align} 
Fix a positive integer $N_0 \in \mathbb{Z}_+$ and for any $M \gg 1$ let 
\begin{align} \label{B2:eq:iv} 
\omega_0(x) 
= 
M^{-2} N^{-\frac{1}{r}} \sum_{N_0 \leq k \leq N_0+N} \varphi_k(x),  
\end{align} 
where $1< p < \infty$, $N = 1, 2, 3 \dots$ and where 
\begin{align*} 
\varphi_k(x) = 2^{( - {1} + \frac{2}{r})k} \varphi_0 (2^k x). 
\end{align*} 
Clearly, the function $\varphi_0$ is odd in both variables $x_1, x_2$ 
and for any $k \geq 1$ the supports of $\varphi_k$ are disjoint and contained in a bounded set 
\begin{equation} \label{B2:eq:suppp} 
\mathrm{supp}\,{ \varphi_k } 
\subset 
\bigcup_{\varepsilon_1, \varepsilon_2 = \pm 1} 
B\big( (\varepsilon_1 2^{-k}, \varepsilon_2 2^{-k}), 2^{-(k+2)} \big). 
\end{equation} 
It is easy to check that $\omega_0 \in W^{1,r}(\mathbb{R}^2) \cap C^\infty_c(\mathbb{R}^2)$. 
In fact, we have 
\begin{lem} \label{B2:lem:omega-0} 
For any $2<r<\infty$ and any positive integer $N$, we have 
\begin{align} \label{B2:eq:omega-0} 
\| \omega_0 \|_{W^{1,r}} \lesssim  M^{-2} 
\end{align} 
\end{lem}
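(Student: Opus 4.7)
The plan is to exploit the disjointness of the supports of the bumps $\varphi_k$ (stated in \eqref{B2:eq:suppp}) so that the $L^r$ norms of $\omega_0$ and $\nabla\omega_0$ decompose exactly as sums of $r$-th powers, and then to check that the scaling exponent $-1+2/r$ in the definition of $\varphi_k$ is precisely the one that renders $\|\nabla\varphi_k\|_{L^r}$ independent of $k$. Once that scaling point is in place, the prefactor $N^{-1/r}$ in \eqref{B2:eq:iv} is exactly what is needed to absorb the sum over the $N$ scales.

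More concretely, I would first compute the rescaling of a single bump. Writing $\varphi_k(x)=2^{(-1+2/r)k}\varphi_0(2^k x)$ and changing variables $y=2^k x$ gives
\begin{equation*}
\|\varphi_k\|_{L^r}^r = 2^{(-r+2)k}\cdot 2^{-2k}\|\varphi_0\|_{L^r}^r = 2^{-rk}\|\varphi_0\|_{L^r}^r,
\qquad
\|\nabla\varphi_k\|_{L^r}^r = 2^{(2-r)k}\cdot 2^{rk}\cdot 2^{-2k}\|\nabla\varphi_0\|_{L^r}^r = \|\nabla\varphi_0\|_{L^r}^r.
\end{equation*}
So $\|\varphi_k\|_{L^r}$ decays geometrically in $k$ while $\|\nabla\varphi_k\|_{L^r}$ is the scale-invariant quantity (this reflects the fact that $W^{1,r}$ on $\mathbb{R}^2$ has critical scaling at $r=2$, and we are on the supercritical side $r>2$, so the gradient term dominates).

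Next, because the supports in \eqref{B2:eq:suppp} are pairwise disjoint, the same disjointness holds for the supports of the $\nabla\varphi_k$. Hence for any $2<r<\infty$,
\begin{equation*}
\|\omega_0\|_{L^r}^r = M^{-2r}N^{-1}\sum_{k=N_0}^{N_0+N}\|\varphi_k\|_{L^r}^r \lesssim M^{-2r}N^{-1}\|\varphi_0\|_{L^r}^r\sum_{k\geq N_0}2^{-rk} \lesssim M^{-2r}N^{-1},
\end{equation*}
\begin{equation*}
\|\nabla\omega_0\|_{L^r}^r = M^{-2r}N^{-1}\sum_{k=N_0}^{N_0+N}\|\nabla\varphi_k\|_{L^r}^r = M^{-2r}N^{-1}\cdot (N+1)\|\nabla\varphi_0\|_{L^r}^r \lesssim M^{-2r}.
\end{equation*}
Taking $r$-th roots and adding yields $\|\omega_0\|_{W^{1,r}}\lesssim M^{-2}$, as claimed.

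There is no real obstacle: the whole argument is a bookkeeping verification that the exponents in the definition of $\omega_0$ have been matched to the $W^{1,r}$ scaling. The only step requiring any care is making sure that the support property \eqref{B2:eq:suppp} really does give strict disjointness across all $k\geq N_0$, so that the $L^r$ norms decouple rather than requiring Littlewood--Paley type estimates; this is immediate from the fact that the radii $2^{-(k+2)}$ are much smaller than the distances $|\,2^{-k}-2^{-(k+1)}|=2^{-(k+1)}$ between successive centers.
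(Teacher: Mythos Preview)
Your proof is correct and is precisely the straightforward computation the paper has in mind; indeed, the paper's own proof consists of the single sentence ``A straightforward calculation is omitted.'' Your argument supplies exactly those omitted details, verifying that the scaling exponent $-1+2/r$ makes $\|\nabla\varphi_k\|_{L^r}$ independent of $k$ and that the disjointness of the supports decouples the $L^r$ norms.
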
 
\begin{proof} 
A straightforward calculation is omitted. 
\end{proof} 
%
%
%
%
%
%
%
%

Let $u = \nabla^\perp\Delta^{-1}\omega \in W^{2,r} \cap C^\infty$ be the associated velocity field and 
consider its Lagrangian flow $\eta(t)$, i.e., the solution of the initial value problem 
\begin{align} \label{B2:eq:flow} 
&\frac{d\eta}{dt}(t,x) = u(t, \eta(t,x)) 
\\  \nonumber 
&\eta(0,x) = x. 
\end{align} 
It can be checked that $\eta(t)$ is smooth and preserves the coordinate axes $x_1$, $x_2$ as well as 
the symmetries of the initial vorticity $\omega_0$ in \eqref{B2:eq:iv}. In fact, the flow is hyperbolic 
near the origin (a stagnation point) and we have the following 
\begin{prop} \label{B2:prop:Lag} 
Given $M \gg 1$ we have 
$$ 
\sup_{0 \leq t \leq M^{-3}} \| D\eta(t) \|_\infty > M 
$$ 
for any sufficiently large integer $N>0$ in \eqref{B2:eq:iv} and any $2<r<\infty$ 
sufficiently close to $2$. 
\end{prop}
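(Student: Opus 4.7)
The strategy is to exploit the symmetries of $\omega_0$ and reduce the proof to analyzing the linearization of $\eta$ at the fixed point $0$. Because $\omega_0$ is odd in both $x_1$ and $x_2$, the stream function $\psi_0=(-\Delta)^{-1}\omega_0$ has the same parities, so $u_1=-\partial_2\psi_0$ is odd in $x_1$ and even in $x_2$ while $u_2=\partial_1\psi_0$ is even in $x_1$ and odd in $x_2$. This forces $u(0)=0$ and $\partial_2 u_1(0)=\partial_1 u_2(0)=0$; together with incompressibility it yields $Du(0,0)=\lambda(0)\,\mathrm{diag}(1,-1)$ with $\lambda(0):=\partial_1 u_1(0,0)$. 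These symmetries persist under \eqref{B2:eq:euler-v}, so the origin remains a stagnation point for all time and $Du(t,0)=\lambda(t)\,\mathrm{diag}(1,-1)$. The Jacobian $J(t)=D\eta(t,0)$ satisfies the linear ODE $\dot J = Du(t,0)J$ with $J(0)=\mathrm{Id}$, and since the matrices $Du(t,0)$ mutually commute,
\begin{equation*}
\|D\eta(t)\|_\infty \,\ge\, \|J(t)\| \,=\, \exp\!\Big(\Big|\int_0^t \lambda(\tau)\,d\tau\Big|\Big).
\end{equation*}
The proposition therefore reduces to the estimate $\big|\int_0^{M^{-3}}\!\lambda(\tau)\,d\tau\big| > \log M$.

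The next step is to compute $\lambda(0)$ from the stream-function representation. Differentiating $\psi_0(x)=-\tfrac{1}{2\pi}\int \log|x-y|\,\omega_0(y)\,dy$ gives
\begin{equation*}
\lambda(0) \,=\, -\partial_1\partial_2 \psi_0(0) \,=\, -\frac{1}{\pi}\int_{\mathbb{R}^2}\frac{y_1 y_2}{|y|^4}\,\omega_0(y)\,dy.
\end{equation*}
The sign pattern in \eqref{B2:eq:bump} is engineered exactly for this integral: at each corner $(\varepsilon_1 2^{-k},\varepsilon_2 2^{-k})$ of the support of $\varphi_k$, both $\omega_0$ and $y_1 y_2$ carry the sign $\varepsilon_1\varepsilon_2$, so $y_1 y_2\,\omega_0(y)$ has the same sign at all four corners. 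Since the kernel $y_1 y_2/|y|^4$ is essentially the constant $2^{2k}/4$ on each corner, the $k$-th bump contributes (up to a fixed positive constant) an amount $2^{(-1+2/r)k}\int\varphi$ to the integral, all with the same sign. Summing over $N_0\le k\le N_0+N$ and reinstating the prefactor $M^{-2}N^{-1/r}$ from \eqref{B2:eq:iv} yields
\begin{equation*}
|\lambda(0)| \,\gtrsim\, M^{-2} N^{-1/r} \sum_{k=N_0}^{N_0+N} 2^{(-1+2/r)k} \,\gtrsim\, M^{-2} N^{1-1/r},
\end{equation*}
provided $r$ is taken close enough to $2$ that $(-1+2/r)N$ stays bounded, so that the geometric sum behaves like its number of terms. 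For $r\to 2^+$ this is of order $M^{-2}N^{1/2}$.

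Finally one must propagate this lower bound to the whole interval $\tau\in[0,M^{-3}]$. Since vorticity is transported, $\omega(\tau,\cdot)=\omega_0\circ\eta(\tau)^{-1}$, and because the reflection symmetries persist in time, the same integral representation for $\lambda(\tau)$ applies against the transported vorticity so that the contribution of each transported bump can be tracked individually. Combining the uniform smallness $\|\omega_0\|_\infty\lesssim M^{-2}$ supplied by Lemma~\ref{B2:lem:omega-0} with a short-time Gronwall argument, one verifies that the bumps move by a negligible amount relative to their own scale on $[0,M^{-3}]$, so that $|\lambda(\tau)|\gtrsim |\lambda(0)|$ on this interval. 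This yields
\begin{equation*}
\sup_{0\le t\le M^{-3}} \|D\eta(t)\|_\infty \,\gtrsim\, \exp\!\big(c\,M^{-5} N^{1/2}\big),
\end{equation*}
and choosing $N\gtrsim M^{10}\log^2 M$ with $r$ sufficiently close to $2$ forces the right-hand side to exceed $M$. The main difficulty is this persistence step: the innermost bumps lie in a strongly hyperbolic region of the flow and their supports are stretched exponentially fast, so one has to control their motion carefully enough to ensure that enough of them continue to contribute constructively (with the same sign) throughout $[0,M^{-3}]$. The delicate balance between the time window $M^{-3}$, the required size of $N$, and the closeness of $r$ to $2$ is the core of the argument.
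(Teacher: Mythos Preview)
The paper does not actually prove this proposition here; it simply refers to Proposition~6 of \cite{MY2}. Your overall strategy --- use the odd--odd symmetry to reduce to the origin, identify $D\eta(t,0)$ with $\exp\big(\int_0^t\lambda\big)\,\mathrm{diag}(1,-1)$, compute $\lambda(0)$ via the Biot--Savart kernel, and then propagate --- is indeed the Bourgain--Li mechanism that \cite{MY2} follows, and your computation of $|\lambda(0)|\gtrsim M^{-2}N^{1-1/r}$ (with the coupled choice of $r$ close to $2$ depending on $N$) is correct.

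The gap is in the persistence step, and it is not a minor one. Your justification that ``the bumps move by a negligible amount relative to their own scale on $[0,M^{-3}]$'' cannot be right, because it is inconsistent with the very conclusion you want: once $\big|\int_0^{t_0}\lambda\big|>\log M$, the linearised flow at the origin has already stretched every bump by a factor comparable to $M$ in one direction and compressed it by $M$ in the other. The smallness $\|\omega_0\|_\infty\lesssim M^{-2}$ does \emph{not} make $\|Du(t)\|_\infty$ small; on the contrary $\|Du(t)\|_\infty\geq|\lambda(t)|\sim M^{-2}N^{1/2}$, so a Gronwall argument on the particle trajectories gives no useful control over the time window $M^{-3}$ (which is precisely calibrated so that $M^{-3}\cdot M^{-2}N^{1/2}\gtrsim\log M$). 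Concretely, a bump initially at $(2^{-k},2^{-k})$ is carried by the (approximate) hyperbolic flow to $(e^{\Lambda}2^{-k},e^{-\Lambda}2^{-k})$, and its contribution to $\int y_1y_2|y|^{-4}\omega\,dy$ shrinks by a factor $\cosh^2(2\Lambda)$, not by $1+o(1)$.

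What actually makes the argument close (and what the proof in \cite{MY2}, following \cite{BL}, carries out) is a feedback/ODE-type analysis: one uses that the symmetry keeps each quadrant invariant and the sign of $\omega$ fixed, writes $\lambda(t)$ as an integral of the transported vorticity against the positive kernel $y_1y_2/|y|^4$ on the first quadrant, and shows that even as the bumps are stretched the integrated quantity $\Lambda(t)=\int_0^t\lambda$ still reaches $\log M$ before time $M^{-3}$. This requires controlling the deviation of the true flow from its linearisation (using, e.g., the $W^{1,r}$ a~priori bound and the dyadic separation of the bump supports), not assuming the bumps are essentially frozen. You identify this as ``the main difficulty'' in your last sentence, but you do not supply the argument; the earlier claim that it follows from $\|\omega_0\|_\infty\lesssim M^{-2}$ and Gronwall is the step that fails.
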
 
\begin{proof} 
See \cite{MY2}; Prop. 6. 
\end{proof} 
In order to proceed we need the following simple comparison result for the derivatives of solutions of 
the Lagrangian flow equations. 
\begin{lem} \label{B2:lem:comp} 
Let $u$ and $v$ be smooth divergence-free vector fields on $\mathbb{R}^2$. 
If $\eta$ and $\tilde\eta$ are the solutions of \eqref{B2:eq:flow} corresponding to $u$ and $u+v$ respectively, 
then 
$$ 
\sup_{0 \leq t \leq 1}{ \sum_{i=0,1} \big\| D^i\eta(t) - D^i\tilde\eta(t) \big\|_\infty } 
\leq 
C\sup_{0 \leq t \leq 1} \sum_{i=0,1} \big\| D^i v(t) \big\|_\infty 
$$ 
for some $C>0$ depending only on  the $L^\infty$ norms of $u$ and its derivatives. 
\end{lem}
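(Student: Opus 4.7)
The plan is to apply Gronwall's inequality twice, first at the level of the trajectories themselves and then at the level of their spatial derivatives.

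For the $i=0$ bound I would subtract the two integrated flow equations and split
\[
\eta(t,x) - \tilde\eta(t,x) = \int_0^t \bigl[u(s,\eta(s,x)) - u(s,\tilde\eta(s,x))\bigr]\,ds - \int_0^t v(s,\tilde\eta(s,x))\,ds.
\]
The first integrand is pointwise controlled by $\|Du(s)\|_\infty\,|\eta(s,x) - \tilde\eta(s,x)|$ via the mean value theorem, and the second by $\|v(s)\|_\infty$. Gronwall's inequality on $[0,1]$ therefore gives
$\|\eta(t) - \tilde\eta(t)\|_\infty \leq C_1\,\sup_{0\le s\le 1}\|v(s)\|_\infty$, with $C_1$ depending only on $\|Du\|_\infty$.

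For the $i=1$ bound I would differentiate the flow equations in $x$ to obtain the linear matrix ODEs $\partial_t D\eta = Du(t,\eta)\,D\eta$ and $\partial_t D\tilde\eta = D(u+v)(t,\tilde\eta)\,D\tilde\eta$. Subtracting, and adding and subtracting $Du(t,\eta)\,D\tilde\eta$, rewrites the evolution of the difference as
\[
\partial_t(D\eta - D\tilde\eta) = Du(t,\eta)(D\eta - D\tilde\eta) + \bigl[Du(t,\eta) - Du(t,\tilde\eta)\bigr]D\tilde\eta - Dv(t,\tilde\eta)\,D\tilde\eta.
\]
The middle bracket is bounded by $\|D^2u\|_\infty\,\|\eta - \tilde\eta\|_\infty\,\|D\tilde\eta\|_\infty$ using the mean value theorem, and $\|\eta - \tilde\eta\|_\infty$ is controlled by the $i=0$ step. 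A separate Gronwall applied to the ODE for $D\tilde\eta$ on $[0,1]$ furnishes an a priori bound $\|D\tilde\eta(t)\|_\infty \leq C_2$. Inserting these estimates and applying Gronwall one more time yields
$\|D\eta(t) - D\tilde\eta(t)\|_\infty \leq C_3\bigl(\sup_s\|v(s)\|_\infty + \sup_s\|Dv(s)\|_\infty\bigr)$,
and summing with the $i=0$ estimate proves the lemma.

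The main delicate point is the a priori estimate for $\|D\tilde\eta\|_\infty$. Since $D\tilde\eta$ satisfies a linear ODE whose coefficient is $D(u+v)(\tilde\eta)$, the naive Gronwall bound is $e^{\|Du\|_\infty + \|Dv\|_\infty}$, so the constant $C_2$ a priori absorbs exponential dependence on $\|Dv\|_\infty$. This is harmless for the intended application, where $v$ is a high-frequency perturbation whose $C^1$ norm is a priori bounded, but it means that the clean dependence "$C$ depends only on $\|u\|$ and its derivatives" is to be read as applying uniformly in $v$ ranging over any fixed bounded set; no further ideas beyond Gronwall are required.
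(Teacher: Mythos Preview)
Your argument is exactly what the paper has in mind: its proof reads in full ``Follows at once by applying Gronwall's inequality to the equation satisfied by the difference $\eta(t) - \tilde\eta(t)$,'' and your two-step Gronwall (first for the trajectories, then for their gradients) is the natural unpacking of that sentence. Your observation that the constant $C$ implicitly absorbs a bound on $\|Dv\|_\infty$ through the a~priori estimate for $\|D\tilde\eta\|_\infty$ is correct and not addressed in the paper, but as you note it is immaterial for the intended application where $v\to 0$ in $C^1$.
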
 
\begin{proof} 
Follows at once by applying Gronwall's inequality to the equation satisfied by 
the difference $\eta(t) - \tilde\eta(t)$. 
\end{proof} 
%

\section{The Proof of the main Theorem} 
\label{sec:proof} 

As in the previous section let $\omega(t) \in W^{1,r} \cap C^\infty$ be the solution of 
the vorticity equations \eqref{B2:eq:euler-v} 
with the initial condition \eqref{B2:eq:iv} and let $\eta(t)$ be the Lagrangian flow of 
the velocity field $u = \nabla^\perp \Delta^{-1}\omega$ as above. 
Our main goal in this section will be to prove 
\begin{theorem} \label{B2:thm:1} 
Let $r> 2$. Assume that the incompressible Euler equations are well-posed 
in the $\alpha$-modulation space 
$M^{1+\sigma,\alpha}_{p,q}(\mathbb{R}^2)$
for any $p\geq 2$, $q\geq 1$, $0 < \alpha \leq 1$ and $0 < \sigma < 1$ 
in the sense of Hadamard. 
Moreover, assume that $M^{1+\sigma,\alpha}_{p,q}$ is topologically embedded in $C^1(\mathbb{R}^2)$. 
Then there exist a $T>0$ and a sequence $\omega_{0,n}$ in $C^\infty_c$ 
with the following properties. 
\begin{itemize} 
\item[1.] 
There is a constant $C>0$ such that 
$\| \omega_{0,n} \|_{W^{1,r}} \leq C$ for sufficiently large positive integers $n$. 
\item[2.] 
For any $M \gg 1$ there is $0 < t_0 \leq T$ such that 
$\| \omega_{n}(t_0)\|_{W^{1,r}} \geq M^{1/3}$ 
for sufficiently large $n$ and for all $r$ sufficiently close to $2$.  
\end{itemize} 
\end{theorem}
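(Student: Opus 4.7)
The plan is to construct, for each $M \gg 1$, the sequence $\omega_{0,n}$ as a perturbation $\omega_{0,n}=\omega_0+\phi_n$, where $\omega_0$ is the initial vorticity from \eqref{B2:eq:iv} (whose Lagrangian flow $\eta$ satisfies $\|D\eta(t_0)\|_\infty>M$ at some $t_0\le M^{-3}$ by Proposition \ref{B2:prop:Lag}), and $\phi_n$ is a single high-frequency radial bump placed in the stretching region of $\eta$. Concretely, take $\phi_n(x)=2^{n(-1+2/r)}\psi\!\left(2^n(x-x_\ast)\right)$ with $\psi$ a fixed smooth radial bump and $x_\ast$ a point near which $\eta$ has quantitative hyperbolic stretching. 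The scaling guarantees that $\|\phi_n\|_{W^{1,r}}$ is uniformly bounded, so item 1 follows (combined with Lemma \ref{B2:lem:omega-0}), while the frequency localization together with the embedding hypothesis (which pushes $r$ close to $2$) yields $\|\phi_n\|_{M^{1+\sigma,\alpha}_{p,q}}\to 0$ as $n\to\infty$.

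The continuity assumption then gives $\omega_n\to\omega$ in $C([0,T];M^{1+\sigma,\alpha}_{p,q})$ on a uniform existence interval $[0,T]$. The embedding $M^{1+\sigma,\alpha}_{p,q}\hookrightarrow C^1$ promotes this to convergence in $C([0,T];C^1)$, and the Biot-Savart law (bounded on $C^\sigma$ for $\sigma>0$) transfers it to $u_n\to u$ in the same topology. Lemma \ref{B2:lem:comp} then gives $\|D\eta_n(t)-D\eta(t)\|_\infty\to 0$ uniformly for $t\in[0,T]$, so for $n$ sufficiently large, $D\eta_n(t_0)$ inherits the quantitative hyperbolic structure of $D\eta(t_0)$ on the small support of $\phi_n$.

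Transport of vorticity gives $\omega_n(t_0)=\omega_0\circ\eta_n(t_0)^{-1}+\phi_n\circ\eta_n(t_0)^{-1}$. The first summand is uniformly bounded in $W^{1,r}$ by Lemma \ref{B2:lem:omega-0} and a change of variables using the $C^1$-closeness of $\eta_n$ to $\eta$. For the second, incompressibility $\det D\eta_n=1$ and the change of variables $x=\eta_n(t_0)^{-1}(y)$ yield
\[
\|D(\phi_n\circ\eta_n(t_0)^{-1})\|_{L^r}^r
=\int \bigl|D\phi_n(x)\,D\eta_n(t_0)(x)^{-1}\bigr|^r\,dx.
\]
Since $D\eta_n(t_0)$ is area-preserving with a singular value $\gtrsim M$ on the support of $\phi_n$, its inverse has a singular value $\gtrsim M$ in the conjugate direction, on which the radial gradient of $\psi$ has a nontrivial component over a definite fraction of the bump. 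A direct lower bound then gives $\|\omega_n(t_0)\|_{W^{1,r}}\gtrsim M^{1/3}$, with the fractional exponent leaving room to absorb orientation mismatches and losses in passing from $\eta$ to $\eta_n$.

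The main obstacle is converting the pointwise supremum estimate in Proposition \ref{B2:prop:Lag} into the spatially quantitative stretching information used above. One has to revisit the construction of $\omega_0$ and $\eta$ in \cite{MY2} and extract, from the stagnation structure of the symmetric flow at the origin, a neighborhood of size $\gtrsim 2^{-n}$ where $D\eta(t_0)$ has singular values of order $M$ and $1/M$ with a controlled direction, and then check that this hyperbolic structure is inherited by $\eta_n$ throughout the support of $\phi_n$ once $x_\ast$ is chosen accordingly. A secondary technical point is ensuring that the unit-$W^{1,r}$ normalization of $\phi_n$ is compatible with its smallness in $M^{1+\sigma,\alpha}_{p,q}$ uniformly over the permitted range of $(s,\alpha,p,q)$, which is exactly where the restriction $r$ close to $2$ enters.
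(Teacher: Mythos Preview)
There is a genuine gap in the perturbation you propose. A dilated spatial bump $\phi_n(x)=2^{n(-1+2/r)}\psi(2^n(x-x_\ast))$ is \emph{not} frequency-localized: its Fourier transform is spread over the whole ball $|\xi|\lesssim 2^n$, not concentrated near a single frequency. As a result the claim $\|\nabla^\perp\Delta^{-1}\phi_n\|_{M^{1+\sigma,\alpha}_{p,q}}\to 0$ fails for most of the parameter range. Already in the H\"older case $p=\infty$, $\alpha=1$ a direct scaling gives
\[
\|\nabla^\perp\Delta^{-1}\phi_n\|_{C^{1+\sigma}}\;\simeq\;[\phi_n]_{C^\sigma}\;\simeq\;2^{\,n(\sigma-1+2/r)},
\]
which \emph{diverges} once $r$ is taken close to $2$ (and Proposition~\ref{B2:prop:Lag} forces $r$ close to $2$). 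Without this convergence you cannot invoke the continuity hypothesis, so the passage from $D\eta$ to $D\eta_n$ via Lemma~\ref{B2:lem:comp} collapses, and with it the whole lower bound in item~2.

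What the paper does instead is precisely to separate the oscillation scale from the envelope scale: the perturbation is $\beta_n(x)\sim n^{-1+\tilde\alpha}\lambda^{-1+2/r}\rho(\lambda(x-x^\ast_\varepsilon))\sin(nx_1)$ with $\lambda=n^{\tilde\alpha}$ and $\hat\rho$ compactly supported. The factor $\sin(nx_1)$ shifts the Fourier support to a ball of radius $\sim n^{\tilde\alpha}$ about $(\pm n,0)$, which for $\tilde\alpha\le\alpha$ sits inside a single $\alpha$-modulation block; this is the content of Lemma~\ref{lem:fi} and is exactly the ``frequency localization'' you need. Choosing $\tilde\alpha>0$ small enough that $-1+\sigma+2\tilde\alpha(1/r-1/p)<0$ then forces $\|\nabla^\perp\Delta^{-1}\beta_n\|_{M^{1+\sigma,\alpha}_{p,q}}\to 0$ (Lemma~\ref{B2:lem:rem}, parts 2--3) while $\|\beta_n\|_{W^{1,r}}$ stays bounded and $\|\partial_{x_1}\beta_n\|_{L^r}\sim 1$. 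That last fact, together with the specific entry $|\partial\eta_2/\partial x_2|>M$ singled out in \eqref{B2:eq:M}, is what produces the $W^{1,r}$ growth after transport; a radial bump with no preferred direction would also complicate this final step. The two-parameter family $(k,\lambda)$ with $\lambda=k^{\tilde\alpha}$ is the missing idea in your sketch.
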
 
Since Hadamard's notion entails continuity of the data-to-solution map 
we deduce from Theorem \ref{B2:thm:1} that continuity cannot hold in $M^{1+s,\alpha}_{p,q}(\mathbb{R}^2)$ 
or else we get a contradiction with the following result. 
\begin{theorem**}[Kato-Ponce \cite{KP}] 
Let $1{<}r {<} \infty$ and $s {>}1{+} 2/r$. For any $\omega_0 \in W^{s-1, r}(\mathbb{R}^2)$ 
and any $T>0$ there exists a constant $K=K(T,\|\omega_0\|_{W^{s-1,r}})>0$ such that 
$$
\sup_{0 \leq t \leq T}\| \omega(t)\|_{W^{s-1,r}} \leq K. 
$$ 
\end{theorem**}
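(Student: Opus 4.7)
The plan is to derive an a priori bound for $\|\omega(t)\|_{W^{s-1,r}}$ via an energy estimate combined with a logarithmic Gronwall argument that closes globally in time thanks to the two-dimensional structure.

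First I would exploit that in 2D the vorticity equation $\omega_t + u\cdot\nabla\omega = 0$ transports $\omega$ by the divergence-free flow $u$, so that $\|\omega(t)\|_{L^p} = \|\omega_0\|_{L^p}$ for every $p \in [1,\infty]$. The hypothesis $s > 1 + 2/r$ gives $s-1 > 2/r$, so Sobolev embedding yields $W^{s-1,r}(\mathbb{R}^2) \hookrightarrow L^\infty$, and hence $\|\omega(t)\|_{L^r}$ and $\|\omega(t)\|_{L^\infty}$ are both controlled uniformly in $t$ by $\|\omega_0\|_{W^{s-1,r}}$.

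Next I would run an energy estimate at the level of the velocity. Apply $\Lambda^s = (1-\Delta)^{s/2}$ to the Euler equation and a Leray projection to remove the pressure, and test against $|\Lambda^s u|^{r-2}\Lambda^s u$. The transport contribution vanishes after integration by parts since $\nabla\cdot u = 0$, and the remaining symmetric commutator is controlled by the Kato--Ponce inequality
$$ \|[\Lambda^s, u]\cdot\nabla u\|_{L^r} \leq C\|\nabla u\|_{L^\infty}\|u\|_{W^{s,r}}.$$
Converting back via the Biot--Savart equivalence $\|u\|_{W^{s,r}} \sim \|\omega\|_{W^{s-1,r}}$ (the low-frequency correction being absorbed into the conserved $L^r$ and $L^\infty$ norms) yields
$$ \frac{d}{dt}\|\omega(t)\|_{W^{s-1,r}} \leq C\|\nabla u(t)\|_{L^\infty}\|\omega(t)\|_{W^{s-1,r}}.$$

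Finally I would close the estimate via the Beale--Kato--Majda logarithmic inequality
$$ \|\nabla u(t)\|_{L^\infty} \leq C\|\omega(t)\|_{L^\infty}\bigl(1 + \log(e + \|\omega(t)\|_{W^{s-1,r}})\bigr) + C\|\omega(t)\|_{L^r},$$
which, combined with the $L^r\cap L^\infty$ conservation from the first step, gives
$$ \frac{d}{dt}\|\omega(t)\|_{W^{s-1,r}} \leq C_0\bigl(1 + \log(e + \|\omega(t)\|_{W^{s-1,r}})\bigr)\|\omega(t)\|_{W^{s-1,r}},$$
with $C_0 = C_0(\|\omega_0\|_{W^{s-1,r}})$. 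Setting $F(t) = \log(e + \|\omega(t)\|_{W^{s-1,r}})$ reduces this to $F' \leq C_0(1+F)$, whence $F(t) \leq (F(0)+1)e^{C_0 t}$ and therefore $\|\omega(t)\|_{W^{s-1,r}}$ is bounded on $[0,T]$ by a constant $K = K(T,\|\omega_0\|_{W^{s-1,r}})$, as required.

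The main obstacle is the commutator step. Running the energy estimate directly at the vorticity level would apply Kato--Ponce to $[\Lambda^{s-1}, u\cdot\nabla]\omega$ and produce a term $\|\Lambda^{s-1}u\|_{L^r}\|\nabla\omega\|_{L^\infty}$, where $\|\nabla\omega\|_{L^\infty}$ is not controlled by Sobolev embedding under our hypothesis (this would require $s > 2 + 2/r$ rather than $s > 1 + 2/r$). The device above, in which $u$ plays both roles and the troublesome factor collapses into $\|\nabla u\|_{L^\infty}\|u\|_{W^{s,r}}$, is exactly what allows the estimate to close; alternatively one can invoke a paradifferential decomposition of the commutator that uses the divergence-free structure to bypass $\|\nabla\omega\|_{L^\infty}$ directly at the vorticity level.
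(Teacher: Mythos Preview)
The paper does not give its own proof of this statement: it is quoted as a known result of Kato and Ponce \cite{KP0,KP} and used as a black box to derive the contradiction with Theorem~\ref{B2:thm:1}. So there is no ``paper's proof'' to compare against.

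That said, your sketch is essentially the standard argument behind the cited Kato--Ponce theorem: conservation of $\|\omega\|_{L^p}$ from the 2D transport structure, a Kato--Ponce commutator estimate at the velocity level to obtain $\tfrac{d}{dt}\|u\|_{W^{s,r}} \lesssim \|\nabla u\|_{L^\infty}\|u\|_{W^{s,r}}$, and then the logarithmic (Beale--Kato--Majda type) inequality together with the conserved $\|\omega\|_{L^\infty}$ to close via a double-exponential Gronwall bound. Your observation that running the commutator at the vorticity level would produce an uncontrolled $\|\nabla\omega\|_{L^\infty}$ factor, and that working at the velocity level collapses both factors into $\|\nabla u\|_{L^\infty}\|u\|_{W^{s,r}}$, is exactly the point. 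The only place to be a bit more careful is the equivalence $\|u\|_{W^{s,r}} \sim \|\omega\|_{W^{s-1,r}}$: the Biot--Savart operator has a low-frequency singularity, so one should either carry $\|u\|_{L^r}$ (controlled, e.g., via energy conservation or an $L^r$ estimate for $u$) as a separate conserved or bounded quantity, or work directly with the velocity norm throughout as Kato and Ponce do. With that caveat your outline is correct and matches the original reference.
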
 
Our Main Theorem will be a direct consequence of Theorem \ref{B2:thm:1}. 

\begin{proof}[Proof of Theorem \ref{B2:thm:1}] 
Given any large number $M \gg 1$ pick $T \leq M^{-3}$. Observe that if 
$\| \omega_0(t_0)\|_{W^{1,r}} > M^{1/3}$ 
for some $0 < t_0 \leq M^{-3}$ then there is nothing to prove and therefore we may assume that 
\begin{equation} \label{B2:eq:assump} 
\|\omega(t) \|_{W^{1,r}} \leq M^{1/3}, 
\qquad 
0 \leq t \leq M^{-3}. 
\end{equation} 
Next, by Proposition \ref{B2:prop:Lag} we can find $0 \leq t_0 \leq M^{-3}$ 
and a point $x^\ast = (x^\ast_1, x^\ast_2)$ in $\mathbb{R}^2$ 
for which the absolute value of one of the entries in the Jacobi matrix $D\eta(t_0,x^\ast)$ 
is at least as large as $M$. 
Because the velocity field $u$ is in $W^{2,r}$ so is the associated Lagrangian flow\footnote{E.g., by the wellposedness theory of \cite{KP}.} 
and hence by continuity in some sufficiently small $\delta$-neighbourhood of $x^\ast$ 
we have e.g., 
\begin{align} \label{B2:eq:M} 
\left| \frac{\partial \eta_2}{\partial x_2} (t_0,x) \right| > M 
\qquad 
\text{for all} 
\quad 
|x-x^\ast| < \delta. 
\end{align} 

We proceed to construct a sequence of high-frequency perturbations of the initial vorticity 
in $W^{1,r}$. 
To this end we choose a smooth bump function $0 \leq \hat{\chi} \leq 1$ with compact support 
in the unit ball $B(0,1)$ in the Fourier space and normalized by 
$\int_{\mathbb{R}^2} \hat\chi(\xi) \, d\xi = 1$. 
Using this function we set 
\begin{equation} \label{B2:eq:bp} 
\hat{\rho}(\xi) = \hat\chi(\xi - \xi_0) + \hat\chi(\xi + \xi_0), 
\qquad 
\xi \in \mathbb{R}^2, 
\quad 
\xi_0 =(2,0) 
\end{equation} 
so that $\mathrm{supp} \, \hat\rho \subset B(-\xi_0,1) \cup B(\xi_0,1)$ with 
\begin{equation} \label{B2:eq:ro2} 
\rho(0) = \int_{\mathbb{R}^2} \hat{\rho}(\xi) \, d\xi = 2 
\end{equation} 
and observe that for any $a >4$ we have 
\begin{equation} \label{B2:eq:sup-ro} 
\mathrm{supp}\, \hat\rho( \cdot \pm a, \cdot ) 
\cap 
B(0,1) = \emptyset. 
\end{equation} 
Define 
\begin{equation} \label{B2:eq:beta-pert} 
\beta_{k,\lambda}^{\tilde{\alpha},r}(x) 
= 
\frac{\lambda^{-1 + \frac{2}{r}}}{k^{1-\tilde\alpha}} 
\sum_{\varepsilon_1, \varepsilon_2 = \pm 1} 
\varepsilon_1 \varepsilon_2  \rho\big( \lambda(x-x^\ast_\epsilon) \big) \sin{kx_1}, 
\qquad 
k \in \mathbb{Z}^+, 
\; 
\lambda >0 
\end{equation} 
where 
$x^\ast_\epsilon = (\varepsilon_1 x^\ast_1, \varepsilon_2 x^\ast_2)$ 
and $\lambda > 0$ and $0 < \tilde \alpha < 1$ will be further specified below. 

\begin{remark}  
Note that the parameter $\lambda$ in \eqref{B2:eq:beta-pert} 
relates to the speed with which the support of the function $\rho$ is spreading in the Fourier space 
while $k$ expresses the speed of its translation. 
In the standard modulation space $M^{1+\sigma,0}_{\infty,1}(\mathbb{R}^2)$ 
one would need to set $\lambda = 1$ but in that case the spreading speed of the support of 
$\rho$ (its shrinking speed in physical space) 
is zero and hence the arguments we apply in the present paper break down. 
Therefore, the case of the standard modulation space $M^{1+\sigma,0}_{\infty, 1}(\mathbb{R}^2)$ 
remains an open problem. 
\end{remark} 

Before defining a suitable perturbation of $\omega_0$ we need to derive several estimates 
for $\beta_{k,\lambda}^{\tilde{\alpha},r}$ which we collect in the following lemma. 
\begin{lem} \label{B2:lem:rem}
Let $2 \leq p\leq \infty$, $2<r<\infty$ and $0<\sigma <1$. 
For any $k \in \mathbb{Z}^+$ and $\lambda >0$ sufficiently large, 
we have 
\begin{enumerate} 
\item[1.] 
$
\| \beta_{k,\lambda}^{\tilde{\alpha},r} \|_{W^{1,r}} 
\lesssim 
k^{-1+\tilde\alpha}+k^{\tilde\alpha}\lambda^{-1} 
$ 
\vskip 0.05cm 
\item[2.] 
$
\|\Delta^{1/2 + \sigma}\partial_j\Delta^{-1}\beta_{k,\lambda}^{\tilde{\alpha},r}\|_{L^p}
\lesssim 
k^{-1+\tilde\alpha}\lambda^{-1+2(1/r-1/p)}(\lambda^\sigma + k^\sigma) 
$
\vskip 0.05cm 
\item[3.] 
$ 
\|\partial_j \Delta^{-1} \beta_{k,\lambda}^{\tilde{\alpha},r} \|_{L^p} 
\lesssim 
k^{-1+\tilde\alpha}\lambda^{-1 + 2(1/r-1/p)} 
$ 
\end{enumerate} 
where $j = 1, 2$. 
\end{lem}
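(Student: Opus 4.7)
The plan is to prove each estimate by direct calculation, using the scaling of $\rho$ and the frequency-localization of $\widehat{\beta_{k,\lambda}^{\tilde\alpha,r}}$.

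For Part 1 I would work entirely in physical space. The starting point is the scaling identity $\|\rho(\lambda(\,\cdot-x_\epsilon^\ast))\|_{L^r}=\lambda^{-2/r}\|\rho\|_{L^r}$ together with the observation that, for $\lambda$ large, the four translates indexed by $\epsilon$ have disjoint supports. Since $|\sin|\le 1$, this gives $\|\beta_{k,\lambda}^{\tilde\alpha,r}\|_{L^r}\lesssim k^{-1+\tilde\alpha}\lambda^{-1}$, which is already absorbed in the claimed bound. For $\nabla\beta_{k,\lambda}^{\tilde\alpha,r}$ the product rule yields two contributions: differentiating $\rho$ brings out a factor $\lambda$ (with $L^r$-size still $\lambda^{-2/r}$), producing a term of order $k^{-1+\tilde\alpha}$; differentiating $\sin(kx_1)$ brings out a factor $k$, producing a term of order $k^{\tilde\alpha}\lambda^{-1}$.

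For Parts 2 and 3 I would instead exploit the Fourier support. Since $\widehat{\rho}$ is supported in $B(\pm\xi_0,1)$, the Fourier transform of $\beta_{k,\lambda}^{\tilde\alpha,r}$ is supported, up to phase factors, in
$$
\bigcup_{(\delta_1,\delta_2)\in\{\pm 1\}^2} B\bigl(\delta_1\lambda\xi_0+\delta_2 ke_1,\,\lambda\bigr),
$$
which for $\lambda$ sufficiently large (recall $|\xi_0|=2$) lies in the region $|\xi|\sim\lambda+k$ and is bounded away from the origin. On this set the symbol $m_j(\xi)=-i\xi_j/|\xi|^2$ of $\partial_j\Delta^{-1}$ is smooth with magnitude $\lesssim(\lambda+k)^{-1}$ and with derivatives of the natural sizes. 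A standard Bernstein/Mikhlin-type Fourier-multiplier bound, obtained after smoothly cutting $m_j$ off near each of the above balls, then yields
$$
\|\partial_j\Delta^{-1}\beta_{k,\lambda}^{\tilde\alpha,r}\|_{L^p}\;\lesssim\;\|\beta_{k,\lambda}^{\tilde\alpha,r}\|_{L^p}.
$$
The $L^p$-norm of $\beta_{k,\lambda}^{\tilde\alpha,r}$ itself is computed exactly as in Part 1 with $r$ replaced by $p$ in the final scaling step, giving
$$
\|\beta_{k,\lambda}^{\tilde\alpha,r}\|_{L^p}\;\lesssim\;\frac{\lambda^{-1+2/r}}{k^{1-\tilde\alpha}}\,\lambda^{-2/p}\;=\;k^{-1+\tilde\alpha}\lambda^{-1+2(1/r-1/p)},
$$
and Part 3 follows. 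Part 2 is the same argument applied to the composed operator $\Delta^{1/2+\sigma}\partial_j\Delta^{-1}$, which on the frequency support of $\beta_{k,\lambda}^{\tilde\alpha,r}$ has symbol of order $\sigma$ and hence magnitude $\lesssim(\lambda+k)^\sigma\sim\lambda^\sigma+k^\sigma$; this contributes the additional factor in the claim.

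The step I expect to be the main obstacle is the Fourier-multiplier bound itself, made uniform in $\lambda$ and $k$. In particular one must avoid the degenerate regime in which $k$ is comparable to $2\lambda$ and one of the centers $\pm\lambda\xi_0\pm ke_1$ approaches the origin, so that $m_j$ ceases to be smooth on the frequency support. This is handled by the standing assumption that $\lambda$ be sufficiently large together with a dyadic cut-off of the symbol around each frequency ball.
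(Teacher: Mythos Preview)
Your Part~1 matches the paper exactly: direct computation in physical space using the scaling of $\rho$ and the product rule for $\nabla\beta_{k,\lambda}^{\tilde\alpha,r}$.

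For Parts~2 and~3 you take a genuinely different route. The paper does not invoke any multiplier theorem; instead it writes down $\hat\beta_{k,\lambda}^{\tilde\alpha,r}$ explicitly, applies the Hausdorff--Young inequality (which is where the hypothesis $p\ge 2$ enters), and then changes variables $\xi\mapsto\lambda\zeta$ to read off the powers of $\lambda$ and $k$ directly from the integral $\int |\lambda\zeta \mp \tfrac{k}{2\pi}e_1|^{\sigma p'}|\hat\rho(\zeta)|^{p'}\,d\zeta$ (respectively with exponent $-p'/2$ for Part~3). This is more elementary: no localized Mikhlin estimate, no uniformity of multiplier constants to track. Your approach, by contrast, pushes the work into a Bernstein/Young-type $L^1$ kernel bound for the symbol cut off to each frequency ball; this is correct and has the advantage of not being tied to $p\ge 2$, but it is heavier machinery than the paper needs. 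The degeneracy you flag (a frequency ball approaching the origin when $k\sim 4\pi\lambda$) is present in both arguments and is not in fact resolved by ``$\lambda$ sufficiently large'' alone; in the paper it is ultimately harmless because the only regime used later is $\lambda=k^{\tilde\alpha}$ with $0<\tilde\alpha\le 1$, where the centers $\pm 2\lambda\pm k/(2\pi)$ stay away from the origin for large $k$.
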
 
\begin{proof} 
We need to compute the $L^r$ norms of $\beta_{k,\lambda}^{\tilde{\alpha},r}$ and its first derivative 
$\partial_i \beta_{k,\lambda}^{\tilde{\alpha},r}$. 
By the triangle inequality and the fact that $\hat\rho$ has compact support we have 
\begin{align*} 
\| \beta_{k,\lambda}^{\tilde{\alpha},r}\|_{L^r} 
\lesssim 
k^{-1+\tilde\alpha}\lambda^{-1}  
\sum_{\varepsilon_1, \varepsilon_2} 
\Bigg( \int_{\mathbb{R}^2} 
\Big| \rho\big( \lambda(x-x^\ast_\varepsilon) \big) \Big|^r \lambda^2 dx \Bigg)^{1/r} 
\lesssim 
k^{-1+\tilde\alpha} \lambda^{-1}. 
\end{align*} 
For the first derivatives, we have 
\begin{align*} 
\Big\| \frac{\partial\beta_{k,\lambda}^{\tilde{\alpha}.r}}{\partial x_1} \Big\|_{L^r} 
&\lesssim 
k^{-1+\tilde\alpha} \lambda^{2/r} 
\sum_{\varepsilon_1,\varepsilon_2} 
\Big\| 
\frac{\partial \rho}{\partial x_1}\big( \lambda(\cdot - x^*_\varepsilon) \big) 
\Big\|_{L^r} 
+ 
k^{\tilde\alpha}\lambda^{-1 + 2/r} 
 \sum_{\varepsilon_1,\varepsilon_2} 
\big\| 
\rho\big(\lambda(\cdot-x^*_\varepsilon)\big) 
\big\|_{L^r} 
\\ 
&\simeq 
k^{-1+\tilde\alpha} 
\Big\| \frac{\partial \rho}{\partial x_1}\Big\|_{L^r} 
+ 
k^{\tilde\alpha}\lambda^{-1} 
\| \rho\|_{L^r}  
\lesssim 
k^{-1+\tilde\alpha} + k^{\tilde\alpha}\lambda^{-1} 
\end{align*} 
and similarly 
\begin{align*} 
\Big\| \frac{\partial\beta_{k,\lambda}^{\tilde{\alpha},r}}{\partial x_2} \Big\|_{L^r} 
\lesssim 
k^{-1+\tilde\alpha} 
\Big\| \frac{\partial \rho}{\partial x_2} \Big\|_{L^r} 
\lesssim 
k^{-1+\tilde\alpha}. 
\end{align*} 
Combining these estimates gives the bound for $\| \beta_{k,\lambda}^{\tilde{\alpha},r} \|_{W^{1,r}}$. 

In order to derive the estimates in the remaining cases it will be convenient to use the Fourier transform 
\begin{align} \label{B2:eq:FTb} 
\hat{\beta}_{k,\lambda}^{\tilde{\alpha},r}(\xi) 
= 
\frac{\lambda^{-1+2/r}}{k^{1-\tilde\alpha}} 
\sum_{\varepsilon_1, \varepsilon_2 = \pm 1} \sum_{j=1,2} 
\varepsilon_1 \varepsilon_2 \frac{ (-1)^{j+1}}{2i\lambda^2} 
\hat{\rho} \big( \lambda^{-1} \xi_j^k \big) 
e^{-2\pi i \langle x_\varepsilon^\ast, \xi_j^k \rangle} 
\end{align} 
where $\xi_j^k = \big( \xi_1 + \frac{(-1)^j}{2\pi}k, \xi_2 \big)$. 
Let $p'$ be the conjugate exponent to $p$. Applying the Hausdorff-Young inequality we obtain 
\begin{align*} 
\big\| \Delta^{\frac{1 + \sigma}{2}}\partial_j \Delta^{-1} \beta_{k,\lambda}^{\tilde{\alpha},r} \big\|_{L^p} 
&\lesssim
\big\| |\cdot|^\sigma \hat\beta_{k,\lambda}^{\tilde{\alpha},r} \big\|_{L^{p'}} 
\\ 
&\lesssim 
k^{-1+\tilde\alpha}\lambda^{-1+2/r}\sum_{j=1,2}\left(\int_{\mathbb{R}^2}\lambda^{-2p'} 
|\xi|^{\sigma p'} \big| \hat{\rho}(\lambda^{-1}\xi_j^k) \big|^{p'} \, d\xi\right)^{1/p'} 
\end{align*} 
and changing the variables we further estimate by 
\begin{align*} 
&\lesssim 
k^{-1+\tilde\alpha}\lambda^{-1+\frac{2}{r}-2(1-\frac{1}{p'})} 
\sum_{j=1,2} 
\left( \int_{\mathbb{R}^2} 
\bigg( 
\Big( \eta_1-\frac{(-1)^j}{2\pi}k \Big)^2
+ 
\eta_2^2 \bigg)^{\frac{\sigma p'}{2}} 
\big| \hat{\rho}(\lambda^{-1}\eta) \big|^{p'} 
\frac{d\eta}{\lambda^2} \right)^{1/p'} 
\\ 
&\lesssim 
k^{-1+\tilde\alpha}\lambda^{-1+\frac{2}{r}-\frac{2}{p}} 
\sum_{j=1,2} 
\left( \int_{\mathbb{R}^2} 
\bigg( 
\Big(\lambda\zeta_1-\frac{(-1)^j}{2\pi}k \Big)^2 
+ 
(\lambda\zeta_2)^2 \bigg)^{\frac{\sigma p'}{2}} \big| \hat{\rho}(\zeta) \big|^{p'} 
d\zeta \right)^{1/p'} 
\\ 
&\lesssim 
k^{-1+\tilde\alpha}\lambda^{-1+2\left(\frac{1}{r}-\frac{1}{p}\right)} 
\big( \lambda^\sigma + k^\sigma \big) 
\end{align*} 
for any $\sigma \geq 0$. 

By the same calculation as above, we also have 
\begin{align*} 
\big\| \partial_j &\Delta^{-1} \beta_{k,\lambda}^{\tilde{\alpha},r} \big\|_{L^p} 
\lesssim
\big\| |\cdot|^{-1}\hat\beta_{k,\lambda} \big\|_{L^{p'}} 
\\
&\lesssim 
k^{-1+\tilde\alpha} \lambda^{-1+\frac{2}{r}-\frac{2}{p}} 
\sum_{j=1,2} \left(\int_{\mathbb{R}^2} 
\left( \Big(\lambda\zeta_1-\frac{(-1)^j}{2\pi}k \Big)^2 
+ 
(\lambda\zeta_2)^2\right)^{-\frac{p'}{2}} \big| \hat{\rho}(\zeta) \big|^{p'} 
d\zeta \right)^{1/p'} 
\\ 
&\lesssim 
k^{-1+\tilde\alpha}\lambda^{-2+2\left(\frac{1}{r}-\frac{1}{p}\right)}
\end{align*} 
for sufficiently large $k$ and $\lambda$. 
\end{proof} 
From now on we will restrict to the case 
\begin{equation} \label{B2:eq:kln} 
\lambda = k^{\tilde\alpha}, 
\quad 
k = n 
\quad \text{and} \quad 
0 < \tilde{\alpha} \leq \alpha \leq 1 
\end{equation} 
and observe that it is possible to choose the integers $n$ are sufficiently large so that, in particular, 
the assumptions of the previous lemma hold. 
Let $\beta_n = \beta_{k,\lambda}^{\tilde{\alpha},r}$ and define a sequence of 
initial vorticities by 
\begin{equation} \label{B2:eq:omega-seq} 
\omega_{0,n}(x) = \omega_0(x) + \beta_n(x), 
\qquad 
n \gg 10. 
\end{equation} 
Combining the first part of Lemma \ref{B2:lem:rem} with equation \eqref{B2:eq:omega-0} of 
Lemma \ref{B2:lem:omega-0} we find that $\omega_{0,n}$ belong to $W^{1,r}$, namely 
\begin{align} \label{B2:eq:omega-0n} 
\| \omega_{0,n} \|_{W^{1,r}} 
\leq 
\| \omega_0\|_{W^{1,r}} + \| \beta_n \|_{W^{1,r}} 
\lesssim 
1 
\end{align} 
for any sufficiently large $n$. This proves the first assertion of Theorem \ref{B2:thm:1}. 

\smallskip 
Denote by $\omega_n(t)$ the sequence of vorticity solutions of \eqref{B2:eq:euler-v} 
with initial data $\omega_{0,n}$ and as before let $\eta_n(t)$ be the Lagrangian flows of 
the corresponding velocity fields $u_n = \nabla^\perp\Delta^{-1}\omega_n$. 
The following lemma will be crucial in what follows. 
\begin{lem} \label{lem:fi} 
Let $0 < \sigma <1$, $0 < \alpha \leq 1$ and $2 \leq p \leq \infty$. 
For any $1 \leq q \leq \infty$ we have 
\begin{align*} 
\| \nabla^\perp \Delta^{-1} \beta_n \|_{M^{1+ \sigma ,\alpha}_{p,q}}  
\simeq  
\|\nabla^\perp \Delta^{-1} \beta_n \|_{L^p} 
+
\| \Delta^{\frac{1 + \sigma}{2}} \nabla^\perp \Delta^{-1} \beta_n \|_{L^p} 
\end{align*} 
for any sufficiently large $n \in \mathbb{Z}^{+}$. 
\end{lem}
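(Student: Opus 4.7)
The strategy is to exploit the sharp Fourier localization of $\beta_n$ to collapse the $\alpha$-modulation norm to a single-scale $L^p$ estimate. Set $g = \nabla^\perp\Delta^{-1}\beta_n$. From the explicit formula \eqref{B2:eq:FTb}, $\hat\beta_n$ is supported in the union of four compact sets, each of diameter $O(\lambda) = O(k^{\tilde\alpha})$, clustered near the frequencies $\pm(k/(2\pi),0)$. Since $\nabla^\perp\Delta^{-1}$ does not enlarge Fourier supports, $\hat g$ lives in the same region. The key geometric observation is that the coupling $\lambda = k^{\tilde\alpha}$ with $\tilde\alpha \leq \alpha$ in \eqref{B2:eq:kln} forces the local $\alpha$-covering scale $(1+|\xi|^2)^{\alpha/2} \sim k^\alpha$ at frequency $\sim k$ to dominate $\lambda$; consequently the Fourier support of $\hat g$ meets only a bounded number $N_0$ of cells $Q$ of the covering, all with $|\xi_Q| \sim k$, and $N_0$ is independent of $n$.

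Granted this cell-counting, both inequalities reduce to routine multiplier estimates. For the upper bound, the BAPU condition combined with Young's convolution inequality gives $\|\mathcal{F}^{-1}(\psi_Q \hat g)\|_{L^p} \lesssim \|g\|_{L^p}$ uniformly in $Q$; summing the $N_0$ relevant terms with weight $(1+|\xi_Q|^2)^{(1+\sigma)/2} \sim k^{1+\sigma}$ produces $\|g\|_{M^{1+\sigma,\alpha}_{p,q}} \lesssim k^{1+\sigma}\|g\|_{L^p}$. Next, choose a smooth cutoff $\chi \in C_c^\infty$ equal to $1$ on $\mathrm{supp}\,\hat g$ and contained in the union of the $N_0$ cells. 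The multiplier $m(\xi) := |\xi|^{1+\sigma}\chi(\xi)$ is smooth, supported in a set of diameter $\lesssim k^\alpha$ on which $|\xi| \sim k$, and a rescaling computation gives $\|\mathcal{F}^{-1}m\|_{L^1} \lesssim k^{1+\sigma}$; Young's inequality then yields $\|\Delta^{(1+\sigma)/2} g\|_{L^p} \lesssim k^{1+\sigma}\|g\|_{L^p}$, and the symmetric argument applied to $|\xi|^{-(1+\sigma)}\chi(\xi)$ (smooth since $|\xi| \sim k$ on its support) gives the reverse bound. Hence the right-hand side of the claimed equivalence is also $\simeq k^{1+\sigma}\|g\|_{L^p}$, matching the LHS upper bound.

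For the reverse inequality, decompose $g = \sum_{Q \text{ relevant}} \mathcal{F}^{-1}(\psi_Q \hat g)$, which is a finite sum of at most $N_0$ terms; the triangle inequality followed by H\"older yields $\|g\|_{L^p} \lesssim k^{-(1+\sigma)}\|g\|_{M^{1+\sigma,\alpha}_{p,q}}$, and the multiplier step transfers the same bound to $\|\Delta^{(1+\sigma)/2}g\|_{L^p}$. The principal obstacle is the initial cell-counting step: it rests critically on the coupling $\lambda = k^{\tilde\alpha}$ with $\tilde\alpha \leq \alpha$, the precise condition guaranteeing that the perturbation's Fourier spread never outruns the local $\alpha$-covering scale. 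Without it, in particular when $\alpha = 0$ (the standard modulation case flagged in the remark preceding the lemma), the support of $\hat g$ would meet unboundedly many cells and the single-scale equivalence would fail.
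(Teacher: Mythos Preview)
Your proposal is correct and follows essentially the same approach as the paper: both arguments hinge on the observation that the coupling $\lambda = k^{\tilde\alpha}$ with $\tilde\alpha \leq \alpha$ forces $\mathrm{supp}\,\hat\beta_n$ (hence $\mathrm{supp}\,\hat g$) to fit inside a bounded number of cells of the $\alpha$-covering at frequency $\sim k$, which collapses the $M^{1+\sigma,\alpha}_{p,q}$ norm to a single-scale $L^p$ quantity. The paper's version is terser---it reduces to the case $s=0$ and asserts that the support sits inside a single $Q$---whereas you more carefully allow $N_0$ cells and spell out the multiplier estimates $\|\Delta^{(1+\sigma)/2}g\|_{L^p} \simeq k^{1+\sigma}\|g\|_{L^p}$; but the underlying mechanism is identical.
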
 
\begin{proof} 
From \eqref{B2:eq:sup-ro} and \eqref{B2:eq:FTb} we see that for any sufficiently large integer 
$n \in \mathbb{Z}^{+}$ the subsets $\mathrm{supp}\, \hat{\beta}_n$ and $B(0,1)$ are disjoint. 
Thus, it suffices to consider the case $\sigma=0$, that is 
$$
\| \beta_n \|_{L^p} \simeq \| \beta_n \|_{M^{0,\alpha}_{p,q}}. 
$$ 
Let $\mathcal Q$ be an admissible $\alpha$-covering by sets of size $|Q|\sim(1+|x|^2)^{\alpha}$. 
Using \eqref{B2:eq:FTb}, \eqref{B2:eq:kln} and the fact that $\mathrm{supp}\,\hat{\rho} \subset B(0,3)$ 
we find 
$$ 
\mathrm{supp}\, \hat{\beta}_{n=2^j} 
\subset 
B\big( (2^{j},0), 2^{\tilde\alpha j} \big) \subset B\big( (2^{j},0), 2^{\alpha j} \big) 
$$ 
so that for any $j \in \mathbb{Z}_{+}$ there is a $Q\in\mathcal Q$ with 
$\mathrm{supp}\, \hat{\beta}_{2^j} \subset Q$ 
and we have 
$$
\| \beta_{2^j} \|_{M^{0,\alpha}_{p,q}} 
= 
\left(\sum_{Q\in\mathcal Q} \|\mathcal F^{-1}\psi_Q\mathcal F\beta_{2^j} \|_p^q\right)^{1/q}
\simeq 
\| \beta_{2^j} \|_{L^p} 
$$
for $q<\infty$. Note that the case $q=\infty$ is analogous.
%
%
\end{proof} 

Suppose now that the data-to-solution map for the Euler equations \eqref{E} is continuous 
from bounded subsets in $M^{1+\sigma,\alpha}_{p,q}(\mathbb{R}^2)$ into 
$C\big( [0,1], M^{1+\sigma,\alpha}_{p,q}(\mathbb{R}^2) \big)$. 
Choose $0 < \tilde \alpha \leq \alpha$ so that 
\begin{equation*}
-1+ \sigma +2\tilde\alpha(1/r-1/p)<0.
\end{equation*}
Then, from estimates 2 and 3 of Lemma \ref{B2:lem:rem} we have  
\begin{align*} 
\| \nabla^\perp \Delta^{-1} \beta_n \|_{L^p}  
+ 
\|\Delta^{\frac{1+ \sigma}{2}} \nabla^\perp \Delta^{-1} \beta_n \|_{L^p} 
\rightarrow 0 
\quad 
\text{as} 
\; 
n \to \infty 
\end{align*} 
where $\beta_n$ is the perturbation sequence defined in \eqref{B2:eq:beta-pert} 
and combining \eqref{B2:eq:omega-seq} with Lemma \ref{lem:fi} 
we obtain 
\begin{align} \label{B2:eq:n1} 
\| \nabla^\perp \Delta^{-1} ( \omega_{0,n} - \omega_0) \|_{M^{1+\sigma,\alpha}_{p,q}} 
\longrightarrow 0 
\quad 
\text{as} 
\; 
n \to \infty. 
\end{align} 
The continuity assumption on the solution map and \eqref{B2:eq:n1} now imply 
\begin{equation} \label{B2:eq:A} 
\sup_{0 \leq t \leq T}\| \nabla^\perp\Delta^{-1} ( \omega_n(t) - \omega(t) ) \|_{M^{1+\sigma,\alpha}_{p,q}} 
\longrightarrow 0 
\quad 
\text{as} 
\; 
n \to \infty 
\end{equation} 
from which, using the embedding assumption $M^{1+\sigma}_{p,q} \subset C^1$ and 
Lemma \ref{B2:lem:comp}, we obtain 
\begin{align} \label{B2:eq:LL} 
\sup_{0\leq t \leq T} \sum_{i=0,1} 
\| D^i\eta_n(t) - D^i\eta(t) \|_\infty 
\longrightarrow 0 
\quad 
\text{as} 
\; 
n \to \infty 
\end{align} 
where as before $\eta(t)$ is the Lagrangian flow of the (smooth) divergence-free vector field 
$u=\nabla^\perp\Delta^{-1}\omega$ of Proposition \ref{B2:prop:Lag} and 
$\eta_n(t)$ is the flow of $u_n = \nabla^\perp\Delta^{-1}\omega_n$ 
whose initial vorticities are given by  \eqref{B2:eq:iv} and \eqref{B2:eq:omega-seq} respectively. 
The rest of the argument is completely analogous to the proof of Theorem 3 in \cite{MY2}. 
Thus we omit the details. 
\end{proof}

\section{A direct proof for $C^{1+\sigma}(\mathbb{R}^3)$ based on shear flow} 
\label{shear flow} 

In this section we present a short and direct argument showing the loss of continuity of 
the data-to-solution map of \eqref{E} in the classical H\"older space $C^{1+\sigma}$ with $0 < \sigma <1$. 
It is inspired by conversations with A. Shnirelman and C. Bardos from whom we learned 
about the DiPerna-Majda shear flow techniques. 

Consider two 3D shear flows 
$$ 
u(t,x) = \big( f(x_2), 0, h(x_1 - tf(x_2)) \big) 
\quad \text{and} \quad 
v(t,x) = \big( g(x_2), 0, h(x_1 - tg(x_2)) \big). 
$$ 
Let $f$, $g$ and $h$ be bounded functions in $C^{1+ \sigma}(\mathbb{R}^3)$ 
with $h$ chosen so that in addition its derivative satisfies 
$$ 
h'(x_1) = |x_1|^\sigma 
\qquad \text{for} \quad 
-2a \leq x_1 \leq 2a 
$$  
where $a=\max\{\sup_{x_1}|f(x_1)|,\sup_{x_1}|g(x_1)|\}$.
It is easy to verify that both $u(t)$ and $v(t)$ solve the 3D Euler equations 
with initial conditions 
$$
u_0(x) = \big( f(x_2), 0, h(x_1) \big) 
\quad \text{and} \quad 
v_0(x) = \big( g(x_2), 0, h(x_1) \big). 
$$ 

Now, given any $\epsilon>0$ adjust $f$ and $g$ so that 
\begin{equation*}
\| u_0 - v_0 \|_{C^{1+\sigma}} = \| f - g \|_{C^{1+\sigma}} < \epsilon 
\end{equation*}
and consider at any time $0 \leq t \leq 1$ the norm of the difference of the corresponding solutions 
\begin{align*}
\| u(t) - v(t) \|_{C^{1+\sigma}} 
&= 
\| f - g \|_{C^{1+\sigma}} 
+ 
\big\| h( \cdot - tf(\cdot)) - h( \cdot - tg( \cdot )) \big\|_{C^{1+\sigma}} 
\\ 
&\geq 
\big\| \nabla \big( h( \cdot - tf( \cdot )) - h(\cdot - tg(\cdot)) \big) \big\|_{C^{\sigma}} 
\\
&= 
\big\| h'( \cdot - tf(\cdot)) - h'(\cdot - tg(\cdot)) \big\|_{C^{\sigma}} 
\end{align*} 
which can be further bounded from below by 
\begin{align*} 
\geq 
\sup_{\stackrel{x \neq y}{x,y\in[-b,b]^2}} 
\frac{ \big| 
\big( |x_1 - tf(x_2)|^\sigma - |x_1 - tg(x_2)|^\sigma \big) 
- 
\big( |y_1 - tf(y_2)|^\sigma - |y_1 - tg(y_2)|^\sigma \big) 
\big|}{ | x - y |^\sigma } 
\end{align*}
where $b=\min\{\sup_{x_1}|f(x_1)|,\sup_{x_1}|g(x_1)|\}$. 

Finally, pick $x_2 = y_2 = c$ for some arbitrary constant $c$ so that 
the expression above becomes 
\begin{equation*}
\geq 
\sup_{\stackrel{x_1 \neq y_1}{x,y\in[-b,b]^2} }
\frac{ 
|( |x_1 - tf(c)|^\sigma - |x_1 - tg(c)|^\sigma ) 
- 
( |y_1 - tf(c)|^\sigma - |y_1 - tg(c)|^\sigma )| 
}{ |x_1 - y_1|^\sigma } 
\end{equation*}
and evaluate it once again from below by choosing 
$x_1 = tg(c)$ and $y_1 = tf(c)$ 
to get the bound 
\begin{equation*}
\geq 
\frac{ t^\sigma |g(c) - f(c)|^\sigma + t^\sigma |f(c) - g(c)|^\sigma }{ t^\sigma |f(c) - g(c)|^\sigma } 
= 2. 
\end{equation*}
This shows local ill-posedness of the 3D Euler equations in $C^{1+\sigma}$ in the Hadamard sense 
considered here. 
$\square$ 

\vspace{0.5cm}
\noindent
{\bf Acknowledgments.}\ 
Part of this work was done while GM was the Ulam Chair Visiting Professor 
at the University of Colorado, Boulder. 
TY was partially supported by JSPS KAKENHI Grant Number 25870004. 
Both authors would like to thank Professors Yasushi Taniuchi and Alexander Shnirelman 
for many inspiring conversations that led to this paper. 

\bibliographystyle{amsplain}

\end{document}